\theoremstyle{plain}
\newtheorem{theorem}{Theorem}[section]
\newtheorem{lemma}[theorem]{Lemma}
\newtheorem{corollary}[theorem]{Corollary}
\theoremstyle{definition}
\newtheorem{remark}[theorem]{Remark}
\newtheorem{example}[theorem]{Example}
\newcommand{\norm}[1]{{||#1||}}
\newcommand{\wtilde}[1]{{\widetilde{#1}}}
\def\Id{\mathop{\mathrm{Id}}\nolimits}
\def\Ran{\mathop{\mathrm{Ran}}\nolimits}
\def\Im{\mathop{\mathrm{Im}}\nolimits}
\def\sgn{\mathop{\mathrm{sgn}}\nolimits}
\def\R{{\mathbb{R}}}
\def\Z{{\mathbb{Z}}}
\def\C{{\mathbb{C}}}
\def\F{{\mathcal{F}}}
\def\H{{\mathcal{H}}}
\def\<{{\langle}}
\def\>{{\rangle}}
\def\ep{{\varepsilon}}
\def\ds{\displaystyle}
\DeclareMathOperator*{\slim}{s-lim}
\title[Wave operators on Sobolev spaces]{Wave operators on Sobolev spaces}
\author{Haruya Mizutani}
\address{Department of Mathematics, Graduate School of Science, Osaka University, Toyonaka, Osaka 560-0043, Japan.}
\email{haruya@math.sci.osaka-u.ac.jp}
\begin{document}
\date{\today}

\begin{abstract}
We provide a simple sufficient condition in an abstract framework to deduce the existence and completeness of wave operators (resp. modified wave operators) on Sobolev spaces from the existence and completeness of the usual wave operators (resp. modified wave operators). We then give some examples of Schr\"odinger operators for which our abstract result applies. An application to scattering theory for the nonlinear Schr\"odinger equation with a potential is also given. 
\end{abstract}

\maketitle

\section{Introduction}
Let $\H$ be a Hilbert space with norm $\norm{\cdot}$ and inner product $\<\cdot,\cdot\>$, and $H_0,H$ two self-adjoint operators on $\H$. The operator norm on $\H$ is also denoted by $\norm{\cdot}$. Define the Sobolev space $\H^s:=\<H_0\>^{-s/2}\H$ of order $s$ with norm 
$\norm{f}_s:=\norm{\<H_0\>^{s/2}f},
$
where $\<\cdot\>=(1+|\cdot|^2)^{1/2}$. A typical example is the Schr\"odinger operators $H_0=-\Delta$ and $H=H_0+V$ on $L^2(\R^n)$ with a real-valued potential $V:\R^n\to\R$
in which case $\H^s=H^s(\R^n)$ is the standard $L^2$-based Sobolev space. 

We regard $H_0$ as a free operator and study the scattering theory for the pair $(H_0,H)$, namely the asymptotic behavior of $e^{-itH}P_{\mathrm{ac}}(H)$ in the limit $t\to\infty$ as a perturbation of the free motion $e^{-itH_0}P_{\mathrm{ac}}(H_0)$ (or a suitable modified free motion in the long-range case), where $P_{\mathrm{ac}}(H)$ denotes the projection onto $\H_{\mathrm{ac}}(H)$, the absolutely continuous spectral subspace of $H$. In particular, we are interested in the asymptotic behavior of $e^{-itH}P_{\mathrm{ac}}(H)$ in $\H^s$ for $s\neq0$. To this end, we consider the existence of the {\it wave and inverse wave operators}
$$
W_{\pm,s}=\slim_{t\to \pm\infty}e^{itH}e^{-itH_0}P_{\mathrm{ac}}(H_0),\ \Omega_{\pm,s}=\slim_{t\to \pm\infty}e^{itH_0}e^{-itH}P_{\mathrm{ac}}(H)\quad\text{in}\quad \H^s.
$$
Under the norm equivalence condition $\H^s\cong D(\<H\>^{s/2})$ (see the condition (H1) below), the existence of $W_{\pm,s}$ implies, for any $u_0\in \H^s$ there exist $u_\pm\in \H^s$ such that
$$
\norm{e^{-itH}u_\pm -e^{-itH_0}P_{\mathrm{ac}}(H_0)u_0}_s\to 0,\quad t\to \pm\infty,
$$
while the existence of $\Omega_{\pm,s}$ implies, for any $u_0\in \H^s$ there exist $u_\pm\in \H^s$ such that
$$
\norm{e^{-itH}P_{\mathrm{ac}}(H)u_0-e^{-itH_0}u_\pm}_s\to 0,\quad t\to \pm\infty.
$$
The existence of the usual wave operators $W_{\pm,0},\Omega_{\pm,0}$ has developed from almost the beginning of mathematical analysis of Quantum Mechanics and there is a huge literature (see, for instance, monographs \cite{ReSi,DeGe,Yaf}). On the other hand, to the best knowledge of the author, the case $s\neq0$ has attracted less interest. However, the wave operators  on Sobolev spaces appear naturally in the study of scattering theory for the nonlinear Schr\"odinger equation
$$(i\partial_t +\Delta-V)u=N(u);\quad u|_{t=0}=u_0\in H^s(\R^n).$$
Indeed, it often happens that, with a suitable nonlinear term $N(u)$, the nonlinear Schr\"odinger equation has a global solution for $u_0\in H^s(\R^n)$ with some $s>0$, but not for $u_0\in L^2(\R^n)$. It is then natural to consider the scattering theory in the same topology of the initial data space. 

This short note provides a simple sufficient condition in an abstract framework to deduce the existence and completeness of $W_{\pm,s}$ from the existence and completeness of $W_{\pm,0}$. Some applications to Schr\"odinger operators with potentials are also given. We also give an application to the scattering theory for a nonlinear Schr\"odinger equation with a linear potential. 

The paper is organized as follows. Section \ref{section_main} is devoted to the main theorem and its proof. In Section \ref{section_example}, we give some applications of the main result to Schr\"odinger operators and the nonlinear scattering theory. Appendix \ref{appendix_A} consists of some supplementary lemmas.

\section{Main result}
\label{section_main}
In what follows, we use the following notation. $\mathbb B(X,Y)$ (resp. $\mathbb B_\infty (X,Y)$) denotes the Banach space of bounded (resp. compact) operators from $X$ to $Y$. Let $\mathbb B(X)=\mathbb B(X,X)$ and $\mathbb B_\infty(X)=\mathbb B_\infty(X,X)$. For positive constants $A,B$, $A\lesssim B$ means $A\le CB$ with some constant $C>0$. 

Let us fix $s\in \R$ and consider the following series of assumptions. 

\begin{itemize}
\item[(H1)] $\H^s\cong D(\<H\>^{s/2})$. In other words, for all $f\in \H^s$ and $g\in D(\<H\>^{s/2})$, $$\norm{\<H\>^{s/2}f}\lesssim \norm{\<H_0\>^{s/2}f},\quad \norm{\<H_0\>^{s/2}g}\lesssim \norm{\<H\>^{s/2}g}. $$
\item[(H2)] $\ds (H_0-z)^{-1}-(H-z)^{-1}\in \mathbb B_\infty(\H^s)\cup \mathbb B_\infty(\H,\H^s)$ for all $z\in \C\setminus\R$. 
\item[(H3)] There exists a family of unitary operators $\{U_0(t)\}_{t\in \R}$ on $\H$ which commutes with $H_0$, that is $U_0(t)\H^2\subset \H^2$ and $[U_0(t),H_0]=0$ on $\H^2$  for all $t\in \R$.
\item[(H4)] For any $f\in \H_{\mathrm{ac}}(H_0)$, $U_0(t)f\to0$ weakly in $\H$ as $t\to \pm\infty$. Moreover, the wave operators
$
\ds W_\pm:=\slim_{t\to \pm\infty}e^{itH}U_0(t)P_{\mathrm{ac}}(H_0)$ in $\H$ exist. 
\item[(H5)] The inverse wave operators
$\ds \Omega_\pm:=\slim_{t\to \pm\infty}U_0(t)^*e^{-itH}P_{\mathrm{ac}}(H)$ in $\H$ exist. 
\end{itemize}

\begin{remark}
\label{remark_2_1}
(1) (H1) and (H2) imply  $[\psi(H_0)-\psi(H)]\varphi(H_0)\in \mathbb B_\infty(\H,\H^s)$ for any $\varphi,\psi\in C_0^\infty(\R)$ (see Lemma \ref{lemma_A_1} in Appendix \ref{appendix_A} below). \\
(2) By a standard approximation argument, we see that $U_0(t)$ also commutes with $\varphi(H_0)$ for any $\varphi\in L^\infty(\R)\cup C(\R)$. In particular, $U_0(t)$ is bounded on $\H^s$ uniformly in $t\in \R$. \\
(3) Under (H4), (H5) is equivalent to $\Ran W_\pm=\H_{\mathrm{ac}}(H)$ in which case $\Omega_\pm=(W_\pm)^*$ (see \cite{ReSi}). \\
(4) By the Riemann-Lebesgue lemma, for all $f\in \H_{\mathrm{ac}}(H)$, $e^{-itH}f\to0$ weakly in $\H$ as $t\to \pm\infty$. 
 \end{remark}

\begin{theorem}	
\label{theorem_1}
Let $s\in \R$. Under {\rm (H1)},  {\rm (H2)} and {\rm (H3)}, the following statements hold: \\
{\rm (1)} If {\rm (H4)} is satisfied then 
$\ds W_{\pm,s}:=
\slim_{t\to \pm\infty}e^{itH}U_0(t)P_{\mathrm{ac}}(H_0)$ 
in $\H^s$ exist. \\
{\rm (2)} If {\rm (H5)} is satisfied then
$\ds \Omega_{\pm,s}:=\slim_{t\to \pm\infty}U_0(t)^*e^{-itH}P_{\mathrm{ac}}(H)$ in $\H^s$ exist. 
 \end{theorem}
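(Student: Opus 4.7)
The plan is to reduce strong convergence in $\H^s$ to strong convergence in $\H$ by means of (H1), which identifies $\H^s$ with $D(\<H\>^{s/2})$ up to an equivalent norm. The first task is the uniform bound $\sup_t\|e^{itH}U_0(t)P_{\mathrm{ac}}(H_0)\|_{\mathbb{B}(\H^s)}<\infty$. This is a short computation: use (H1) to replace $\|\cdot\|_s$ on the output by $\|\<H\>^{s/2}\cdot\|$, commute $\<H\>^{s/2}$ past $e^{itH}$, use (H1) again to switch to $\<H_0\>^{s/2}$, commute the latter past $U_0(t)$ via (H3), and finally use unitarity of $U_0(t)$ on $\H$. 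The analogous bound for $U_0(t)^*e^{-itH}P_{\mathrm{ac}}(H)$ in part~(2) is identical.

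Next, fix $f=\varphi(H_0)h$ with $\varphi\in C_0^\infty(\R)$ and $h\in \H_{\mathrm{ac}}(H_0)$; such vectors are dense in $P_{\mathrm{ac}}(H_0)\H^s$ by a spectral cutoff in $H_0$. Pick $\psi\in C_0^\infty(\R)$ equal to $1$ on $\supp\varphi$, and use $\varphi(H_0)=\psi(H_0)\varphi(H_0)$ together with $\psi(H_0)=\psi(H)+[\psi(H_0)-\psi(H)]$ to split
\begin{equation*}
\<H\>^{s/2}\varphi(H_0) \;=\; \bigl[\<H\>^{s/2}\psi(H)\bigr]\varphi(H_0) \;+\; K, \qquad K\;:=\;\<H\>^{s/2}\bigl[\psi(H_0)-\psi(H)\bigr]\varphi(H_0).
\end{equation*}
The first summand lies in $\mathbb{B}(\H)$ because $\<\cdot\>^{s/2}\psi$ is bounded with compact support, and $K\in\mathbb{B}_\infty(\H)$ by Remark~\ref{remark_2_1}(1) composed with the bounded map $\<H\>^{s/2}:\H^s\to\H$ coming from (H1). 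Commuting $U_0(t)$ past $\varphi(H_0)$ (Remark~\ref{remark_2_1}(2)) and $e^{itH}$ past $\<H\>^{s/2}\psi(H)$ yields
\begin{equation*}
\<H\>^{s/2}\,e^{itH}U_0(t)f \;=\; \bigl[\<H\>^{s/2}\psi(H)\bigr]\,e^{itH}U_0(t)f \;+\; e^{itH}\,K\,U_0(t)h.
\end{equation*}
By (H4) the first summand converges in $\H$ to $\bigl[\<H\>^{s/2}\psi(H)\bigr]W_\pm f$; by (H4) again $U_0(t)h\to 0$ weakly in $\H$, so compactness of $K$ plus unitarity of $e^{itH}$ drive the second summand to $0$. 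Combined with $e^{itH}U_0(t)f\to W_\pm f$ in $\H$ and closedness of the self-adjoint operator $\<H\>^{s/2}$, this forces $W_\pm f\in D(\<H\>^{s/2})=\H^s$ together with convergence in the graph norm, i.e.\ in $\H^s$. The uniform bound above plus density of such $f$'s finish part~(1).

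Part~(2) is the mirror image: work with the dense subset $\chi(H)P_{\mathrm{ac}}(H)\H\subset P_{\mathrm{ac}}(H)\H^s$ for $\chi\in C_0^\infty(\R)$, decompose $\<H_0\>^{s/2}\chi(H)$ by inserting $\tilde\chi(H_0)$ through an auxiliary $\tilde\chi\in C_0^\infty(\R)$ equal to $1$ on $\supp\chi$, and run the analogous argument using (H5) together with $e^{-itH}P_{\mathrm{ac}}(H)g\to 0$ weakly in $\H$ from Remark~\ref{remark_2_1}(4). This invokes the variant $[\psi(H_0)-\psi(H)]\varphi(H)\in\mathbb{B}_\infty(\H,\H^s)$ of Remark~\ref{remark_2_1}(1), which follows by the same argument as in Lemma~\ref{lemma_A_1} thanks to the symmetry of (H1)--(H2) in $H_0$ and $H$. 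The crux of the whole approach is this decomposition: $\<H\>^{s/2}$ does not commute with $U_0(t)$, which is the only obstruction to reducing directly to (H4) or (H5); converting $\psi(H_0)$ to $\psi(H)$ pays the price of the error $[\psi(H_0)-\psi(H)]\varphi(H_0)$, and (H2) is exactly what makes this error compact on $\H^s$, so its action on the weakly null orbit furnished by the absolutely continuous subspace vanishes in norm.
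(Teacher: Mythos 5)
Your proof is correct and follows essentially the same route as the paper: the same splitting of $\psi(H_0)$ into $\psi(H)$ plus the compact error $[\psi(H_0)-\psi(H)]\varphi(H_0)$ (Lemma \ref{lemma_A_1}) applied to spectrally cut-off vectors, with the error term annihilated by the weakly vanishing orbit $U_0(t)h$, combined with uniform $\H^s$-boundedness and density. The only cosmetic difference is that you identify the limit $W_\pm f$ directly and invoke closedness of $\<H\>^{s/2}$, whereas the paper runs a Cauchy-sequence argument with $R$-dependent cutoffs $\varphi_R,\psi_R$; your explicit observation that part (2) requires the variant $[\psi(H_0)-\psi(H)]\varphi(H)\in\mathbb B_\infty(\H,\H^s)$, proved by the same Helffer--Sj\"ostrand argument, correctly fills in a detail the paper dismisses as ``analogous.''
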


\begin{proof} We prove the existence of $W_{+,s}$ only, the proof of other statements being analogous. Let $u_0\in \H^s$ and $W(t)=e^{itH}U_0(t)P_{\mathrm{ac}}(H_0)$ which is bounded on $\H^s$ uniformly in $t$ by (H1) and (H3). We shall show that, for any sequence $t_n\to \infty$, $\{W(t_n)u_0\}$ is a Cauchy sequence in $\H^s$. Let us fix $\ep>0$ and $\varphi\in C_0^\infty(\R)$ be such that $\varphi\equiv1$ near origin and set $\varphi_R(\lambda)=\varphi(\lambda/R)$. Then
\begin{align*}
\norm{u_0-\varphi_R(H_0)u_0}_{s}^2=\int_{\sigma(H_0)} |1-\varphi(\lambda/R)|^2d\norm{E_{H_0}(\lambda)u_0}_s^2\to 0,\quad R\to \infty,
\end{align*}
by the dominated convergence theorem. In particular, there exists $R\ge1$ such that 
$$
\sup_{n,m}\norm{(W(t_n)-W(t_m))(u_0-\varphi_R(H_0)u_0)}_{s}\lesssim \norm{u_0-\varphi_R(H_0)u_0}_{s}< \ep.
$$
We thus may replace $u_0$ by $u_R:=\varphi_R(H_0)u_0$ without loss of generality. Choose $\psi \in C_0^\infty(\R)$ so that $\psi\varphi\equiv\varphi$.  Then
\begin{align*}
[W(t_n)-W(t_m)]u_R=\psi_R(H)[W(t_n)-W(t_m)]u_R+(1-\psi_R)(H)[W(t_n)-W(t_m)]u_R.
\end{align*}
Since $\norm{\<H_0\>^{s/2}\psi_R(H)}\lesssim R^{s/2}$ by (H1),  the second assumption in (H4) implies
\begin{align*}
\norm{\psi_R(H)[W(t_n)-W(t_m)]u_R}_{s}\lesssim R^{s/2}\norm{[W(t_n)-W(t_m)]u_R}\to 0,\quad m,n\to \infty.
\end{align*}
Moreover, since
$
(1-\psi_R)(H)\varphi_R(H_0)=[\psi_R(H_0)-\psi_R(H)]\varphi_R(H_0)
$, we have
\begin{align*}
&\norm{(1-\psi_R)(H)[W(t_n)-W(t_m)]u_R}_{s}\\
&\le \norm{[\psi_R(H_0)-\psi_R(H)]U_0(t_n)P_{\mathrm{ac}}(H_0)u_R}_{^s}+\norm{[\psi_R(H_0)-\psi_R(H)]U_0(t_m)P_{\mathrm{ac}}(H_0)u_R}_{s}
\to 0
\end{align*}
as $m,n\to \infty$, where we have used (H3) in the first inequality, the first condition in (H4) and Remark \ref{remark_2_1} (1) in the last step, respectively. Hence $\{W(t_n)u_R\}$ is a Cauchy sequence in $\H^s$. 
\end{proof}


\section{Application to Schr\"odinger equations}
\label{section_example}

Here we apply the above theorem to the scattering theory for Schr\"odinger equations on $\R^n$. Throughout this section, we set $\H=L^2(\R^n)$ and $H_0=-\Delta$ with $D(H_0)=H^2(\R^n)$ in which case $\H^s=H^s(\R^n)$. We first give some typical examples of potentials satisfying (H1) and (H2). 
\begin{example}
\label{example_1_4}
Let $\max(n/2,1)\le p<\infty$, and $p>1$ if $n=2$. Suppose that $V:\R^n\to \R$ belongs to $L^p(\R^n)+L^\infty_0(\R^n)$, where $L^\infty_0(\R^n)$ is the $L^\infty$-norm closure of $C_0^\infty(\R^n)$. Then $V$ is $H_0$-form compact (see Lemmas \ref{lemma_B_1} in Appendix \ref{appendix_A} below), having relative bound zero. By the KLMN theorem, we can define a self-adjoint operator  $H=H_0+V$ as the form sum such that $D(\<H\>^{1/2})\cong \H^1$. Then (H1) and (H2) hold for $s\in [-1,1]$. Indeed, the complex interpolation and a duality argument show $D(\<H\>^{s/2})\cong \H^s$ for $s\in [-1,1]$. For the part (H2), we compute
\begin{align*}
&\<H_0\>^{s/2}[(H_0-z)^{-1}-(H-z)^{-1}]\\
&=\<H_0\>^{s/2}(H_0-z)^{-1}V(H-z)^{-1}\\
&=\<H_0\>^{s/2+1/2}(H_0-z)^{-1}\cdot\<H_0\>^{-1/2}|V|^{1/2}\cdot \sgn V
|V|^{1/2}\<H_0\>^{-1/2}\cdot \<H_0\>^{1/2}(H-z)^{-1},
\end{align*}
where $\<H_0\>^{-1/2}|V|^{1/2}$ is compact and the other terms are bounded on $\H$ as long as $s\le1$. 
\end{example}

\begin{example}
Let $n\ge3$, $a>-(n-2)^2/4$ and $V(x)=a|x|^{-2}$. By Hardy's inequality $$\frac{(n-2)^2}{4}\int |x|^{-2}|f|^2\le \int |\nabla f|^2,\quad f\in C_0^\infty(\R^n),$$
$H=H_0+V$ defined as the Friedrichs extension of the quadratic form $\<(H_0+V)u,u\>$ on $C_0^\infty(\R^n)$ satisfies $D(\<H\>^{1/2})\cong\H^1$. Hence (H1) holds for $s\in [-1,1]$. Moreover, writing
\begin{align*}
&\<H_0\>^{s/2}(H-z)^{-1}|x|^{-2}(H_0-z)^{-1}\\
&=\<H_0\>^{s/2}\<H\>^{-1/2}\cdot\<H\>^{1/2}(H-z)^{-1}\<H\>^{1/2}\,\times\\
&\quad\times \<H\>^{-1/2}\<H_0\>^{1/2}\cdot \<H_0\>^{-1/2}|x|^{-1}\cdot|x|^{-1}\<H_0\>^{-1}\cdot \<H_0\>(H_0-z)^{-1}
\end{align*}
we see that $|x|^{-1}\<H_0\>^{-1}\in\mathbb B_\infty(\H)$ since $|x|^{-1}\in L^{p}(\R^n)+L_0^\infty(\R^n)$ with some $n/2<p<n$, otherwise are bounded on $\H$ if $s\le1$. Therefore $(H_0-z)^{-1}-(H-z)^{-1}\in\mathbb B_\infty(\H,\H^s)$ for $s\le1$.
\end{example}

We next provide some examples to which our abstract theorem applies. 

\begin{example}[Short range potential]
\label{example_3_3}
Assume that $V:\R^n\to \R$ satisfies one of the following: 
\begin{itemize}
\item $n=1$ and $V\in L^1(\R)$;
\item $n\ge2$ and $V=V_0+V_1+\cdots+ V_N$, where 
\begin{itemize}
\item[$\bullet$]$|V_0(x)|\lesssim \<x\>^{-\rho}$ for  some $\rho>1$ and, \item[$\bullet$] $V_j\in L^{p_j}(\R^n)$ for $p_j$ satisfying $n/2\le p_j\le (n+1)/2$ and $p_j>1$ for $j=1,...,N$;
\end{itemize}
\item  $n\ge3$ and  $V(x)=a|x|^{-2}$ with $a>-(n-2)^2/4$. 
\end{itemize}
Then (H1) and (H2) hold for $s\in [-1,1]$ by the above examples. Moreover, the usual wave operators $
\ds W_\pm=\slim_{t\to\pm\infty} e^{itH}e^{-itH_0}$ in $\H$ exist and are complete. We refer to Reed-Simon \cite[Theorem XI.30]{ReSi} for the first case, Ionescu-Schlag \cite{IoSc} for the second case, respectively. For the last case, the existence and completeness of $W_\pm$ follow from the fact that $|x|^{-1}$ is both $H_0$-smooth and $H$-smooth in the sense that $|x|^{-1}e^{-itH_0},|x|^{-1}e^{-itH}\in \mathbb B(L^2(\R^n),L^2(\R^{1+n}))$ (see \cite{BPST2}) and the smooth perturbation theory by Kato \cite{Kat}. Hence, for all $s\in [-1,1]$, Theorem \ref{theorem_1} with  $U_0(t)=e^{-itH_0}$ applies. 
\end{example}

\begin{example}[Long-range potential]
\label{example_3_4}
Let $n\ge1$ and $V=V^S+V^L:\R^n\to \R$ be such that 
\begin{itemize}
\item $V^S$ is $H_0$-compact and $\<H_0\>^{-1}V(x)\mathds1_{[1,\infty)}(|x|/r)\<H_0\>^{-1/2}\in L^2([0,\infty)_r;L^2(\R^n_x))$;
\item $V^L\in C^\infty(\R^n)$  and, with some $\rho>0$,  
$|\partial_x^\alpha V(x)|\le C_\alpha \<x\>^{-\rho-|\alpha|}$ on $\R^n$ for all $\alpha\in \Z_+^n$.
\end{itemize}
Then (H1) and (H2) hold for $s\in [-2,2]$ since $V$ is $H_0$-compact. Moreover, there exists $S\in C^\infty(\R\times \R^n;\R)$ such that
\begin{itemize}
\item $\partial_\xi^\alpha(S(t,\xi)-t|\xi|^2)=o(t)$ as $|t|\to \infty$ for $|\alpha|\le2$;
\item for any $\ep>0$, there exists $T_\ep>0$ such that for $|\xi|\ge \ep$, $|t|\ge T_\ep$, $S(t,\xi)$ solves
$$
\partial_t S(t,\xi)=|\xi|^2+V^L(\nabla_\xi S(t,\xi));
$$
\item the modified wave operators
$
\ds W_\pm^S=\slim_{t\to\pm\infty} e^{itH}e^{-iS(t,D)}$ in $\H$ 
exist and are complete. 
\end{itemize}
We refer to \cite[Theorem 4.7.1]{DeGe}. Moreover, it follows from the above asymptotics of $S$ that for any $f\in \H$, $e^{-iS(t,D)}f\to 0$ weakly in $\H$ as $t\to \infty$ by the stationary phase theorem. Theorem \ref{theorem_1} with $U_0(t)=e^{-iS(t,D)}$ thus applies for $s\in [-2,2]$. 
\end{example}

\begin{remark}
As a typical example, $V(x)=-Z|x|^{-\mu}$ with $Z\in \R$ and $\mu\in (0,1]$ satisfies the above condition in Example \ref{example_3_4} if $n\ge3$.
\end{remark}

\begin{example}[Point interaction]
\label{example_3_6}
Let $\alpha\in \R$ and $H_{\alpha}=-\partial_x^2+\alpha \delta$ be the Schr\"odinger operator with a delta potential in $\R$. More precisely, $H_\alpha$ is defined as follows: 
$$
H_\alpha f=-f'',\quad D(H_\alpha)=\{f\in H^1(\R)\cap H^2(\R\setminus\{0\})\ |\ f'(0+)-f'(0-)=\alpha f(0)\}. 
$$
Note that $H_\alpha|_{\alpha=0}$ coincides with $H_0f=-f''$ with $D(H_0)=\H^2$. Then the form domain of $H_\alpha$ is $\H^1$ and $K_z:=(H_\alpha-z)^{-1}-(H_0-z)^{-1}$ is a rank one operator with the kernel 
$$
K_z(x,x')=-2\alpha\sqrt z(i\alpha+2\sqrt z)^{-1}e^{i\sqrt z(|x|+|x'|)}
$$
where we take a branch of $\sqrt z$ so that $\Im \sqrt z>0$ (see \cite[Chapter 1.3]{AGHH}). In particular, $K_z$ is in the trace class. We decompose $\<H_0\>^{s/2}K_z\<H_0\>^{-s/2}=K_z^2K_z^1$, where $K_z^1:=e^{i\sqrt z|x|/2}\<H_0\>^{-s/2}$ and 
\begin{align*}
K_z^2f(x):=-2\alpha\sqrt z(i\alpha+2\sqrt z)^{-1}\<H_0\>^{s/2}\int e^{i\sqrt z(|x|+|x'|/2)}f(x')dx. 
\end{align*}
Then $K^1_z$  is compact on $\H$ if $s>0$ and $K_z^2$ is bounded on $\H$ if $s<3/2$ since $\F(e^{-|x|})(\xi)=O(\<\xi\>^{-2})$. Hence $K_z\in \mathbb B_\infty(\H^s)$ if $0<s<3/2$. By the duality, $K_z\in \mathbb B_\infty(\H^s)$ for $-3/2<s<0$. Finally, by the scattering theory for trace class operators (see \cite{Yaf}), the usual wave operators on $\H$ exist and are complete. Hence Theorem \ref{theorem_1} with $U_0(t)=e^{-itH_0}$ applies for $s\in [-1,1]$. \end{example}

We conclude this section with a simple application of the above examples to the nonlinear scattering theory. For the sake of simplicity we only consider the following defocusing nonlinear Schr\"odinger equation with a potential $V$:
\begin{align}
\label{3_1}
i\partial_tu+\Delta u-V(x)u=|u|^{\sigma}u\quad \text{on}\quad \R\times \R^n;\quad u|_{t=0}=u_0\in \H^1,
\end{align}
where we suppose one of the following assumptions (A1)--(A4):
\begin{itemize}
\item[(A1)] $n=1$, $\alpha>0$,\ $V=\alpha\delta$ is the delta potential as in Example \ref{example_3_6} and $\sigma>4$;
\item[(A2)] $n=1$, $\<x\>V,\<x\>V'\in L^1(\R)$, $V\ge0$, $xV'\le0$ and $\sigma>4$;
\item[(A3)] $n=3$, $V\in \mathcal K_0\cap L^{3/2}(\R^3;\R)$, $\norm{V_-}_{\mathcal K}<4\pi$, $\norm{(x\cdot\nabla V)_+}_{\mathcal K}<4\pi$ and $\sigma=2$;
\item[(A4)] $3\le n\le6$, $V=a|x|^{-2}$ and $(n,a,\sigma)$ satisfies
$$\begin{cases}
a>-\frac{(n-2)^2}{4}&\text{if\quad $n=3$ and $4/3<\sigma\le 2$},\\
a>-\frac{(n-2)^2}{4}+\left(\frac{n-2}{2}-\frac{1}{\sigma}\right)^2&\text{if\quad $3\le n\le 6$ and $\max\left(\frac{2}{n-2},\frac 4n\right)<\sigma< \frac{4}{n-2}$}.
\end{cases}
$$
\end{itemize}
Here $\mathcal K$ is the so-called global Kato class with norm 
$$
\norm{V}_{\mathcal K}=\sup_{x\in \R^3}\int_{\R^3}|x-y|^{-1}|V(y)|dy
$$
and $\mathcal K_0$ is the norm closure of bounded compactly supported functions with respect to $\norm{\cdot}_{\mathcal K}$. Note that under one of these conditions (A1)--(A4) the spectrum of $H=-\Delta+V$ is purely absolutely continuous $\sigma(H)=\sigma_{\mathrm{ac}}(H)=[0,\infty)$ and $P_{\mathrm{ac}}(H)=\Id_{L^2(\R^n)}$. 

It was proved by Banica-Visciglia \cite{BaVi} for (A1), Lafontaine \cite{Laf} for (A2), Hong \cite{Hon} for (A3) and Lu-Miao-Murphy \cite{LMM} for (A4), respectively that \eqref{3_1} is globally well-posed in $\H^1$ and the solution $u$ scatters to a linear solution in $\H^1$ in the sense that there exist $\wtilde u_\pm \in \H^1$ such that 
$$
\norm{u(t)-e^{-itH}\wtilde u_\pm}_{\H^1}\to0,\quad t\to \pm\infty.
$$
On the other hand, it follows from Examples \ref{example_3_3} and \ref{example_3_6} that Theorem \ref{theorem_1} with $U_0(t)=e^{-itH_0}$  and $s=1$ holds. Hence, we have the following

\begin{corollary}
\label{corollary_3_7}
Let $u_0\in \H^1$. Then the solution $u$ obtained by the above previous works scatters to a free solution in $\H^1$, namely there exist $u_\pm\in \H^1$ such that
$$
\norm{u(t)-e^{-itH_0}u_\pm}_{\H^1}\to0,\quad t\to \pm\infty.
$$
\end{corollary}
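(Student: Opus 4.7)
The plan is to invoke Theorem \ref{theorem_1}(2) with $s=1$ and $U_0(t)=e^{-itH_0}$ in order to convert the known $e^{-itH}$-asymptotics of the nonlinear solution into $e^{-itH_0}$-asymptotics, then conclude by a triangle inequality.

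First, I would check that the four potential classes (A1)--(A4) all fall under the examples that precede the corollary. The delta potential in (A1) is exactly the setting of Example \ref{example_3_6}; the $L^1$-type potential in (A2) fits the first bullet of Example \ref{example_3_3} (since $\<x\>V\in L^1(\R)$ forces $V\in L^1(\R)$); the potential in (A3) lies in $L^{3/2}(\R^3)=L^{n/2}(\R^n)$, so it fits the second bullet of Example \ref{example_3_3} (with $p_1=3/2$, which is in the allowed range $[n/2,(n+1)/2]=[3/2,2]$); and (A4) is exactly the inverse-square case covered by the third bullet of Example \ref{example_3_3}. In every case, the hypotheses (H1)--(H2) hold for $s\in[-1,1]$ and the usual wave operators $W_\pm=\slim e^{itH}e^{-itH_0}$ exist and are complete on $L^2(\R^n)$, so (H3)--(H5) hold with $U_0(t)=e^{-itH_0}$.

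Applying Theorem \ref{theorem_1}(2) with $s=1$ therefore yields a bounded inverse wave operator
$$
\Omega_{\pm,1}=\slim_{t\to\pm\infty}e^{itH_0}e^{-itH}P_{\mathrm{ac}}(H)\quad\text{in }\H^1.
$$
I would then set $u_\pm:=\Omega_{\pm,1}\wtilde u_\pm\in\H^1$, where $\wtilde u_\pm$ are the $\H^1$-asymptotic states provided by \cite{BaVi,Laf,Hon,LMM}. Because $\sigma(H)=\sigma_{\mathrm{ac}}(H)$ gives $P_{\mathrm{ac}}(H)=\Id$ under each of (A1)--(A4), and because $e^{-itH_0}$ commutes with $\<H_0\>^{1/2}$ and is therefore an isometry on $\H^1$, the definition of $\Omega_{\pm,1}$ is equivalent to
$$
\norm{e^{-itH}\wtilde u_\pm-e^{-itH_0}u_\pm}_{1}\to0,\quad t\to\pm\infty.
$$

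Combining this with the nonlinear scattering bound $\norm{u(t)-e^{-itH}\wtilde u_\pm}_{\H^1}\to 0$ via the triangle inequality delivers the stated conclusion. The argument is essentially a bookkeeping exercise on top of Theorem \ref{theorem_1}; the only step that requires mild attention is matching each potential in (A1)--(A4) to the appropriate example in Section \ref{section_example}, which I have outlined above.
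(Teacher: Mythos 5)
Your proposal is correct and follows exactly the route the paper intends: the cited works give $\norm{u(t)-e^{-itH}\wtilde u_\pm}_{\H^1}\to0$, Examples \ref{example_3_3} and \ref{example_3_6} verify (H1)--(H5) for each of (A1)--(A4) so that Theorem \ref{theorem_1}(2) with $s=1$, $U_0(t)=e^{-itH_0}$ and $P_{\mathrm{ac}}(H)=\Id$ yields $\norm{e^{-itH}\wtilde u_\pm-e^{-itH_0}\Omega_{\pm,1}\wtilde u_\pm}_{\H^1}\to0$, and the triangle inequality finishes. Your matching of (A1)--(A4) to the examples (including the observation that $\<x\>V\in L^1$ forces $V\in L^1$, and that both branches of (A4) imply $a>-(n-2)^2/4$) is exactly the bookkeeping the paper leaves implicit.
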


\begin{remark}
(1) With the additional condition $V\in W^{1,3/2}(\R^n)$, Hong \cite{Hon} has proved in case of (A3) that  the solution $u$ scatters to a free solution in $\H^1$. We here do not need such an additional regularity. \\
(2) It was claimed in  \cite{Laf} that $u$ scatters to a free solution in $\H^1$ under the condition (A2). However, the proof in \cite{Laf}  used the same argument as in \cite[Propositions 3.1]{BaVi} in which it was shown that $\{e^{itH}u(t)\}$ is Cauchy in $\H^1$ as $t\to\infty$. This implies the scattering to a linear solution  in $\H^1$, but the scattering to a free solution  in $\H^1$ seems to be not an obvious consequence. 
\end{remark}

\appendix
\section{Some supplementary lemmas}
\label{appendix_A}
\begin{lemma}
\label{lemma_A_1}
Under {\rm (H1)} and {\rm (H2)}, $[\psi(H_0)-\psi(H)]\varphi(H_0)\in \mathbb B_\infty(\H,\H^s)$ for any $\varphi,\psi\in C_0^\infty(\R)$.
\end{lemma}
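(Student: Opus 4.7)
The strategy is to combine the Helffer-Sj\"ostrand functional calculus with the resolvent compactness in (H2). Since $\varphi\in C_0^\infty(\R)$, the function $\lambda\mapsto\<\lambda\>^{s/2}\varphi(\lambda)$ lies in $L^\infty(\R)$, so the functional calculus for $H_0$ yields $\varphi(H_0)\in\mathbb B(\H,\H^s)$. Consequently it suffices to show that $\psi(H_0)-\psi(H)$ is compact in the appropriate space---in $\mathbb B(\H^s)$ when the first alternative in (H2) holds and in $\mathbb B(\H,\H^s)$ in the second case---and then compose with $\varphi(H_0)$ on the right.

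To express the functional-calculus difference in terms of resolvent differences, pick an almost analytic extension $\tilde\psi\in C_0^\infty(\C)$ of $\psi$ satisfying $|\bar\partial\tilde\psi(z)|\le C_N|\Im z|^N$ for every $N\ge0$, and use the Helffer-Sj\"ostrand formula
$$
\psi(H_0)-\psi(H)=-\frac{1}{\pi}\int_\C\bar\partial\tilde\psi(z)\,\bigl[(H_0-z)^{-1}-(H-z)^{-1}\bigr]\,dL(z),
$$
which a priori converges in the strong operator topology on $\H$. The plan is to upgrade this to absolute convergence in the operator norm of the relevant ideal, so that the compactness of the integrand is transferred to the integral.

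For this step, fix $z_0\in\C\setminus\R$ and apply the second resolvent identity twice to obtain the factorization
$$
(H_0-z)^{-1}-(H-z)^{-1}=\bigl[1+(z-z_0)(H-z)^{-1}\bigr]\bigl[(H_0-z_0)^{-1}-(H-z_0)^{-1}\bigr]\bigl[1+(z-z_0)(H_0-z)^{-1}\bigr].
$$
The middle factor is compact in the relevant space by (H2). The two outer factors are bounded on $\H$ with norm $\lesssim 1+|z-z_0|/|\Im z|$, and by (H1) together with the spectral bound $\|(H-z)^{-1}\|_{\mathbb B(\H)}\le|\Im z|^{-1}$, the same type of estimate holds on $\H^s$. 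Hence the integrand has operator norm $\lesssim|\bar\partial\tilde\psi(z)|(1+|z|)^{2}|\Im z|^{-2}$ in the space under consideration. Because $\bar\partial\tilde\psi$ is compactly supported and vanishes to infinite order on $\R$, the Bochner integral converges absolutely in operator norm, and as a norm-limit of compact operators it is itself compact; the reduction from the first paragraph then finishes the proof. The main technical point is the $\H^s$-boundedness of $(H-z)^{-1}$ with explicit control in $z$, which is precisely where the norm equivalence (H1) is invoked.
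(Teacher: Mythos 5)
Your proof is correct, and its skeleton is the same as the paper's: represent $\psi(H_0)-\psi(H)$ by the Helffer--Sj\"ostrand formula, observe that the integrand is compact thanks to (H2), and use the rapid vanishing of $\bar\partial\widetilde\psi$ near the real axis to upgrade to norm convergence, so that the limit inherits compactness. Where you genuinely differ is in how the integrand is controlled. The paper keeps $\varphi(H_0)$ inside the integral, uses (H2) at \emph{every} $z$ for compactness, and asserts the $O(|\Im z|^{-1})$ norm bound by citing (H1), (H2) without detail. You instead invoke (H2) at a single fixed $z_0$ and propagate compactness to all $z$ through the double resolvent identity
\begin{equation*}
(H_0-z)^{-1}-(H-z)^{-1}=\bigl[1+(z-z_0)(H-z)^{-1}\bigr]\bigl[(H_0-z_0)^{-1}-(H-z_0)^{-1}\bigr]\bigl[1+(z-z_0)(H_0-z)^{-1}\bigr],
\end{equation*}
bounding the outer factors on $\H$ by the spectral theorem and on $\H^s$ via (H1). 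This buys two things: the norm bound $\lesssim(1+|z|)^2|\Im z|^{-2}$ comes out mechanically rather than by assertion, and (H2) is needed at one point only --- a real advantage, since (H2) is stated as a union and could a priori hold in $\mathbb B_\infty(\H^s)$ for some $z$ and in $\mathbb B_\infty(\H,\H^s)$ for others, whereas your argument fixes one alternative once and for all. The weaker power $|\Im z|^{-2}$ costs nothing because $|\bar\partial\widetilde\psi(z)|\le C_N|\Im z|^{N}$ for every $N$. Finally, pulling $\varphi(H_0)$ out of the integral is legitimate, as $\langle\cdot\rangle^{s/2}\varphi\in L^\infty$ gives $\varphi(H_0)\in\mathbb B(\H,\H^s)$, and composing the compact factor with this bounded one (on the appropriate side in each alternative of (H2)) yields an element of $\mathbb B_\infty(\H,\H^s)$.
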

\begin{proof}
We shall show $\<H_0\>^{s/2}[\psi(H_0)-\psi(H)]\varphi(H_0)\in \mathbb B_\infty(\H)$. Helffer-Sj\"ostrand's formula implies
$$
\<H_0\>^{s/2}[\psi(H_0)-\psi(H)]\varphi(H_0)=\frac{1}{2\pi i}\int_\C\frac{\partial \widetilde{\varphi}}{\partial \overline z}(z)\<H_0\>^{s/2}[(H_0-z)^{-1}-(H-z)^{-1}]\varphi(H_0)dz\wedge d\overline z
$$
where $\widetilde{\varphi}\in C_0^\infty (\R^2)$ is an almost analytic extension of $\varphi$ satisfying $\partial_{\overline z}\widetilde{\varphi}(z)=O(\<z\>^{-N}|\Im z|^N)$ for any $N\ge1$ (\cite{HeSj}). By (H1), (H2), the operator in the integrand is compact and its operator norm is $O(|\Im z|^{-1})$. Hence, the integral converges in norm, being compact on $\H$. 
\end{proof}

\begin{lemma}
\label{lemma_B_1}
Let $\max(n,2)\le r<\infty$, $r>2$ if $n=2$, and $w\in L^r(\R^n)$. Then $w\<D\>^{-1}$ is compact on $L^2(\R^n)$. 
\end{lemma}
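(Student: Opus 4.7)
The plan is to reduce the compactness assertion, via a density argument in $L^r(\R^n)$, to the case where $w$ is smooth and compactly supported, and then to invoke the Rellich--Kondrachov theorem.

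The first step is to establish the operator bound $\norm{w\<D\>^{-1}}\lesssim\norm{w}_{L^r}$ on $L^2(\R^n)$. I would use that $\<D\>^{-1}:L^2(\R^n)\to H^1(\R^n)$ is bounded, together with the Sobolev embedding $H^1(\R^n)\hookrightarrow L^q(\R^n)$, which holds for $q=\infty$ when $n=1$, for any $q\in[2,\infty)$ when $n=2$, and for $q\in[2,2n/(n-2)]$ when $n\ge 3$. With $q$ chosen so that $1/q+1/r=1/2$, H\"older's inequality gives that multiplication by $w\in L^r$ carries $L^q$ into $L^2$. A short case check confirms that the hypotheses $\max(n,2)\le r<\infty$ (with $r>2$ if $n=2$) are precisely what is needed to place this $q$ in the admissible Sobolev range.

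Next, I would pick $w_k\in C_0^\infty(\R^n)$ with $\norm{w-w_k}_{L^r}\to 0$; the bound above then yields $\norm{(w-w_k)\<D\>^{-1}}\to 0$, so $w\<D\>^{-1}$ is a norm limit of the operators $w_k\<D\>^{-1}$. Since the compact operators form a norm-closed subspace of $\mathbb B(L^2(\R^n))$, it suffices to show that each $w_k\<D\>^{-1}$ is compact. For $w_k\in C_0^\infty(\R^n)$ this is immediate: fixing a bounded open set $\Omega\supset\supp w_k$, the operator $w_k\<D\>^{-1}$ maps $L^2(\R^n)$ boundedly into $H^1_0(\Omega)$, and the Rellich--Kondrachov embedding $H^1_0(\Omega)\hookrightarrow L^2(\Omega)\hookrightarrow L^2(\R^n)$ is compact.

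I do not expect a serious obstacle in the argument. The only point that requires genuine care is the case analysis matching the Sobolev exponent $q$ to the hypothesis on $r$ across $n=1$, $n=2$, and $n\ge 3$, since this is exactly where the precise form of the condition $\max(n,2)\le r<\infty$ (with $r>2$ if $n=2$) enters; beyond that it is a standard approximation-plus-Rellich construction.
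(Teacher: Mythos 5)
Your proposal is correct and follows essentially the same route as the paper: the operator bound $\norm{w\<D\>^{-1}}\lesssim\norm{w}_{L^r}$ via H\"older plus Sobolev embedding (the paper phrases it with $H^{n/r}\hookrightarrow L^{2r/(r-2)}$, you with $H^1\hookrightarrow L^q$, which is equivalent since $n/r\le 1$), followed by $C_0^\infty$ approximation in $L^r$ and norm-closedness of the compact operators. The only difference is that you spell out the compactness of $w_k\<D\>^{-1}$ via Rellich--Kondrachov, a detail the paper simply asserts.
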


\begin{proof}
We recall  Sobolev's inequality 
$
\norm{f}_{L^q(\R^n)}\lesssim \norm{f}_{H^s(\R^n)}$, 
where $s\ge n(1/2-1/q)$ if $2\le q<\infty$ or $s>n/2$ if $q=\infty$. If $r>2$ and $n\ge2$ then
$$
\norm{wf}_{L^2(\R^n)}\le \norm{w}_{L^r(\R^n)}\norm{f}_{L^{\frac{2r}{r-2}}(\R^n)}\lesssim \norm{w}_{L^r(\R^n)}\norm{f}_{H^{n/r}(\R^n)}.
$$
When $r=2$ and $n=1$, we similarly have
$$
\norm{wf}_{L^2(\R)}\le \norm{w}_{L^2(\R)}\norm{f}_{L^{\infty}(\R)}\lesssim \norm{w}_{L^2(\R)}\norm{f}_{H^{1}(\R)}.
$$
Since $n/r\le1$ by assumption, these estimates show
$
\norm{w\<D\>^{-1}}\lesssim \norm{w}_{L^r(\R^n)}.
$
Let $w_n\in C_0^\infty(\R^n)$ be such that $\norm{w-w_n}_{L^r(\R^n)}\to 0$ as $n\to\infty$. Then $w_n\<D\>^{-1}\to w\<D\>^{-1}$ in norm by the above computation. Since $w_n\<D\>^{-1}$ is compact,  $w\<D\>^{-1}$ is also compact.
\end{proof}


\end{document}